\newtheorem{theorem}{Theorem}
\newtheorem{proposition}[theorem]{Proposition}
\newtheorem{corollary}[theorem]{Corollary}
\def\r{\mathbb R}
\def\s{\mathbb S}
\date{}
\begin{document}
\title{Constant mean curvature surfaces with boundary on a sphere }
\author{Rafael L\'opez}
 \address{Departamento de Geometr\'{\i}a y Topolog\'{\i}a\\
Universidad de Granada\\
18071 Granada, Spain\\}
 \email{ rcamino@ugr.es}
 \author{Juncheol Pyo}
\address{Department of Mathematics\\ Pusan National University\\ Busan 609-735, Korea}
\email{jcpyo@pusan.ac.kr}

\begin{abstract}

In this article we study the shape of a compact surface of constant mean curvature of Euclidean space whose boundary is contained in a round sphere. We consider the case that the boundary is prescribed or that the surface meets the sphere with a constant angle. We study under what geometric conditions the surface must be spherical. Our results apply in many scenarios in physics where in absence of gravity a liquid drop is deposited on a round solid ball and the air-liquid interface is a critical point for area under all variations that preserve the enclosed volume.
\end{abstract}
 \subjclass[2000]{ 76B45, 53A10, 49Q10, 35J60}
\keywords{ mean curvature, Alexandrov method, maximum principle}

\maketitle

%%%%%%%%%%%%%%%%%%%%%%%%%%%%%%%%%%%%%%%%%%%%%%%%%
\section{Introduction}
%%%%%%%%%%%%%%%%%%%%%%%%%%%%%%%%%%%%%%%%%%%%%%%%%%%%

 Surfaces with constant mean curvature, abbreviated by cmc surfaces, are mathematical models of soap
films and soap bubbles and, in general, of interfaces and capillary surfaces. Under conditions of homogeneity and in absence of gravity, an interface attains a state of physical equilibrium when minimizes the surface area enclosing a fixed volume, or at least, when it is a critical point for the area under deformations that preserve the volume. The literature on capillarity is extensive and we refer the classical text of Finn \cite{fi}; applications in physics and technological processes appear in \cite{ad,gbq,lan}. We will study two physical phenomena. First, a liquid drop $W$ deposited on a surface $\Sigma$ in such way that the wetted region by $W$ on $\Sigma$ is a prescribed domain $\Omega\subset\Sigma$. Then the air-liquid interface $S=\partial W- \Omega$ is a compact cmc surface whose boundary curve $\partial S$ is prescribed to be $\partial\Omega$. The second example appears in contexts of wetting and capillarity. Consider a liquid drop $W$ deposited on a given support $\Sigma$ and such that $W$ can freely move along $\Sigma$. Here the curve $\partial S$ remains on $\Sigma$ but now is not prescribed. In equilibrium, the interface $S$ is a cmc surface and $S$ meets $\Sigma$ with a constant contact angle $\gamma$, where $\gamma$ depends on the materials.

Both contexts correspond with two mathematical problems. Denote by $S$ a compact smooth surface with boundary $\partial S$. The first problem is as follows. Given a closed smooth curve $\Gamma\subset\r^3$, study the shape of a compact cmc surface $S$ whose boundary is $\partial S=\Gamma$. For example, we ask whether the geometry of $\Gamma$ imposes restrictions to the possible configurations of $S$, such as, if the symmetries of $\Gamma$ are inherited by $S$. To be precise, suppose that $\Phi:\r^3\rightarrow\r^3$ is an isometry such that $\Phi(\Gamma)=\Gamma$ and let $S$ be a cmc surface spanning $\Gamma$. Then we ask if $\Phi(S )=S$. The simplest case of boundary is a circle contained in a plane $\Sigma$. This curve is invariant by rotations with respect to the straight line $L$ orthogonal to $\Sigma$ through the center of $\Gamma$. If $\Gamma$ is a circle of radius $r>0$ and $H\not=0$, there exist two spherical caps (the large and the small one) bounded by $\Gamma$ and with radius $1/|H|$, $0<|H|r\leq 1$ (if $|H|=1/r$, then both caps are hemispheres). Also, the planar disk bounded by $\Gamma$ is a cmc surface with $H=0$. All these examples are rotational surfaces where the axis of revolution is $L$. However, it should be noted that Kapouleas found non-rotational compact cmc surfaces bounded by a circle \cite{ka1}.

When the surface is embedded we can apply the so-called Alexandrov reflection method (or the method of moving planes) based on the maximum principle for elliptic partial differential equations of second order, which consists in a process of reflection about planes, using the very surface as a barrier \cite{al}. Thanks to this technique, given a circle $\Gamma$ contained in a plane $\Sigma$, if $S$ is a compact embedded cmc surface with $\partial S=\Gamma$ and $S$ lies on one side of $\Sigma$, then $S$ is a spherical cap. Recall that Kaopuleass examples are non-embedded surfaces and lies on one side of the boundary plane. The lack of examples provides us evidence supporting the next

\begin{quote}
Conjecture. A compact embedded cmc surface in $\r^3$ spanning a circle is a spherical cap or a planar disk.
\end{quote}

Thus, the conjecture reduces to the question under what conditions such a surface lies on one side of $\Sigma$. Some partial answers have been obtained in \cite{bemr,ko,lm}. However it is not known if there exists a cmc surface spanning a circle as in Figure \ref{fig1}, left.

 \begin{figure}[hbtp]
\begin{center}\includegraphics[width=.8\textwidth]{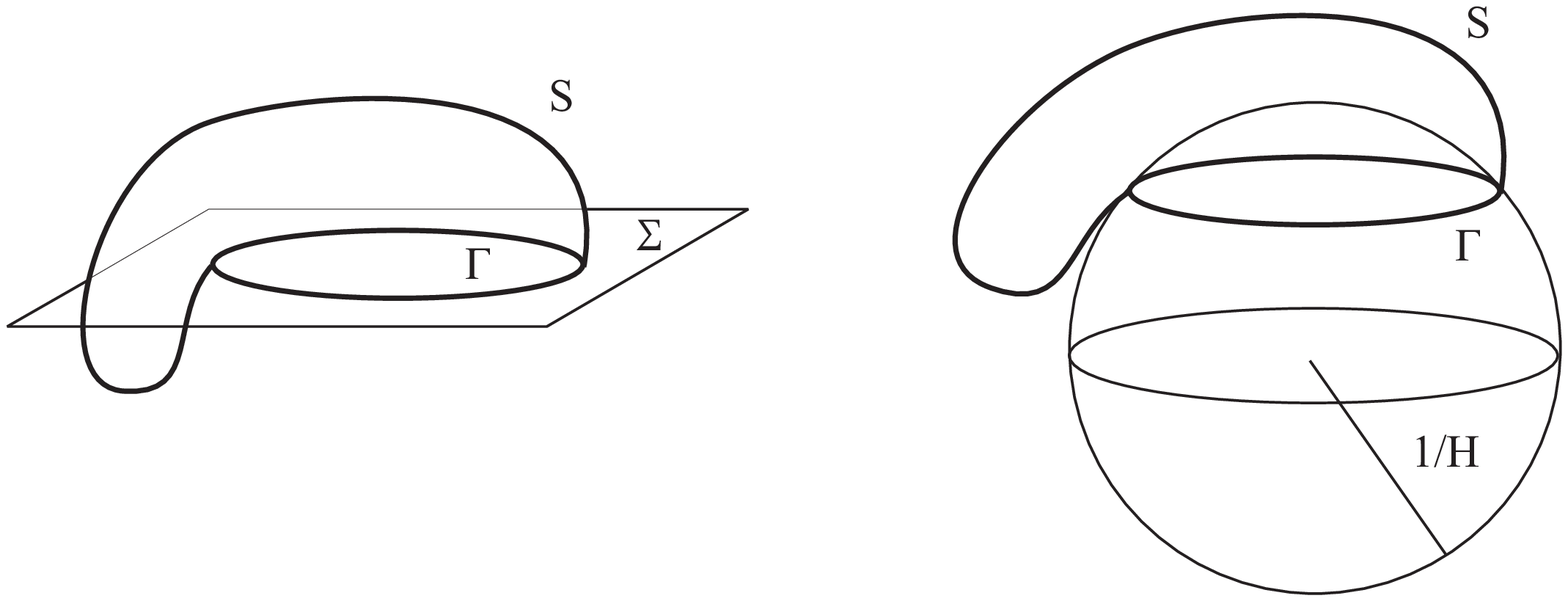}\end{center}
\caption{Left. A possible compact embedded cmc surface $S$ spanning a circle $\Gamma$. Right. This surface is not possible by Corollary \ref{cor1}}  \label{fig1}
\end{figure}

The mathematical formulation of the second setting is the following. Consider a regular region $R\subset\r^3$ with $\Sigma=\partial R$. Let $S$ be a compact surface $S$ with $int(S)\subset int(R)$ and $\partial S\subset\Sigma$ separating a bounded domain $W\subset R$ with a prescribed volume. The domain $W$ is bounded by $S$ and by pieces of $\Sigma$. Let $\gamma\in [0,\pi]$. We seek a surface $S$ which is critical for the energy functional $\mbox{area}(S)-(\cos\gamma)\mbox{ area}(\partial W\cap \Sigma)$ in the space of compact surfaces with boundary contained in $\Sigma$ and interior contained in $int(R)$ and preserving the volume of $W$. In such case, we say that $S$ is a stationary surface. A stationary surface is characterized by the fact that its mean curvature $H$ is constant and $S$ meets $\Sigma$ in a constant angle $\gamma$ along $\partial S$.

 In this article we shall consider both problems when the supporting surface $\Sigma$ is a sphere. First in Section \ref{sec2} we study compact embedded cmc surfaces with prescribed boundary on a sphere $\Sigma$. We give results showing that the surface inherits some symmetries of its boundary, and we prove that the Conjecture is true in some special cases. See Figure \ref{fig1}, right. In Section \ref{sec3}, we study stationary surfaces with boundary on a sphere. In physics, these configurations appear in the context of capillarity, for example, \cite{ed,kiit,mc,osr,vs,wmjs}. In the case that we study here, say, $\Sigma$ is a sphere, a result of Taylor asserts that the boundary of a stationary surface is smooth because of $\Sigma$ \cite{ta}. With the above notation, if $R$ is the closed ball defined by $\Sigma$, there are examples of stationary surfaces intersecting $\Sigma$ with a  contact angle: besides the planar disks and spherical caps, whose boundary is a circle contained in $\Sigma$, there are pieces of rotational (non spherical) cmc surfaces whose boundary is formed by two coaxial circles \cite{de}. Nitsche proved that the only cmc surface homeomorphic to a disk that meets $\Sigma$ at a contact angle is either a planar disk or a spherical cap (\cite{ni}; also \cite{rs}). By the physical interest, we also study the case that the mean curvature depends linearly on a spatial coordinate.

%%%%%%%%%%%%%%%%%%%%%%%%%%%%%%%%%
\section{Surfaces with prescribed boundary in a sphere}\label{sec2}
%%%%%%%%%%%%%%%%%%%%%%%%%%%%%%%%%%%%%

Let $\Gamma$ be a boundary curve, possibly disconnected, on a sphere of radius $\rho$ and centered at the origin, which will be denoted by $\s_\rho$. We want to study the shape of a compact embedded cmc surface $S$ such that its boundary curve $\partial S$ is $\Gamma$. The simplest example is when $\Gamma$ is a circle $\Gamma\subset\s_\rho$. Then there is a planar disk ($H=0$) spanning $\Gamma$ and a family of spherical caps bounded by $\Gamma$. A second example appears when $\Gamma$ lies in an open hemisphere of $\s_\rho$. Then, under conditions on mean convexity of $\Gamma$, it is possible to construct radial graphs on a domain of the hemisphere and spanning $\Gamma$ \cite{lo2,se}. By a radial graph on a domain $\Omega\subset \s_\rho$, also called a surface with a one-to-one central projection on $\Omega$, we mean a surface $S$ such that any ray emanating from the origin and crossing $\Omega$ intersects $S$ once exactly.

We shall use the Hopf maximum principle of elliptic equations of divergence type, that in our context of cmc surfaces, we call the tangency principle.

\begin{proposition}[Tangency principle] Let $S_1$ and $S_2$ be two surfaces of $\r^3$ that are tangent at some common point $p$. Assume that $p\in int(S_1)\cap int(S_2)$ or $p\in\partial S_1\cap\partial S_2$. In the latter case, we further assume that $\partial S_1$ and $\partial S_2$ are tangent at $p$ and both are local graphs over a common neighborhood in the tangent plane $T_p S_1=T_pS_2$. Consider on $S_1$ and $S_2$ the unit normal vectors agreeing at $p$. Assume that with respect to the reference system determined by the unit normal vector at $p$, $S_1$ lies above $S_2$ around $p$, which be denoted by $S_1\geq S_2$. If $H_1\leq H_2$ at $p$, then $S_1$ and $S_2$ coincide in an open set around $p$.
\end{proposition}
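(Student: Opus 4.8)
The plan is to reduce the statement to the classical strong maximum principle (and, in the boundary case, to the Hopf boundary point lemma) for a linear elliptic operator, by writing both surfaces as graphs over their common tangent plane and comparing the elliptic equations their mean curvatures satisfy. First I would fix coordinates so that $p$ is the origin, the common tangent plane $T_pS_1=T_pS_2$ is the horizontal plane $\{z=0\}$, and the common unit normal $N$ at $p$ points in the $+z$ direction. Since the surfaces are tangent at $p$, each $S_i$ is, near $p$, the graph of a function $z=u_i(x,y)$ on a common domain $\Omega$ of the tangent plane, with $u_i(0,0)=0$ and $\nabla u_i(0,0)=0$; the hypothesis $S_1\geq S_2$ then reads $w:=u_1-u_2\geq 0$ on $\Omega$, with $w(0,0)=0$. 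With the upward orientation (which agrees with $N$ at the origin), the mean curvature of each graph satisfies $\mathcal{M}[u_i]=2H_i$, where $\mathcal{M}[u]=\mathrm{div}\big(\nabla u/\sqrt{1+|\nabla u|^2}\big)$ is the mean curvature operator. The decisive structural fact is that $\mathcal{M}$ depends only on $\nabla u$ and $D^2u$, never on $u$ itself.

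Next I would subtract the two equations and linearize. Writing the difference of the nonlinear operators as
\[
\mathcal{M}[u_1]-\mathcal{M}[u_2]=\int_0^1\frac{d}{dt}\,\mathcal{M}[tu_1+(1-t)u_2]\,dt=Lw,
\]
the chain rule expresses the right-hand side as $Lw=a^{ij}D_{ij}w+b^iD_iw$, a linear second-order operator whose coefficients $a^{ij},b^i$ are bounded functions built from $u_1,u_2$ and their first and second derivatives. Ellipticity of $\mathcal{M}$ makes $L$ elliptic, and, precisely because $\mathcal{M}$ carries no dependence on $u$, the operator $L$ has no zeroth-order term. Since $S_1,S_2$ are cmc, the inequality $H_1\leq H_2$ at $p$ forces $H_1\leq H_2$ identically, so $Lw=2(H_1-H_2)\leq 0$ on all of $\Omega$; that is, $w$ is a nonnegative supersolution of $L$ that vanishes at the origin.

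Finally I would invoke the maximum principle. In the interior case $p\in int(S_1)\cap int(S_2)$, the origin is an interior minimum of $w$, so the strong minimum principle (valid because $L$ is elliptic and has no zeroth-order term) yields $w\equiv 0$ near the origin, i.e. $S_1=S_2$ on an open set. In the boundary case $p\in\partial S_1\cap\partial S_2$, the origin lies on $\partial\Omega$; the hypotheses that $\partial S_1,\partial S_2$ are tangent at $p$ and that both surfaces are graphs over a common neighborhood of the tangent plane guarantee that $\Omega$ satisfies an interior ball condition at the origin, so Hopf's boundary point lemma applies: if $w\not\equiv 0$, then $w>0$ in $int(\Omega)$ and $\partial w/\partial\nu<0$ at the origin for the outer normal $\nu$. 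But the tangency of $S_1$ and $S_2$ at $p$ gives $\nabla w(0,0)=0$, hence $\partial w/\partial\nu(0,0)=0$, a contradiction; therefore $w\equiv 0$ and again $S_1=S_2$ on an open set around $p$.

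I expect the main obstacle to be the boundary case rather than the interior one. The analytic heart — linearizing $\mathcal{M}$ into an elliptic $L$ with no zeroth-order term — is routine, but one must check carefully that the tangency of the two boundary curves, together with the common-graph hypothesis, produces a domain $\Omega$ to which Hopf's lemma genuinely applies (in particular an interior sphere condition at the contact point, which is exactly why those two hypotheses are imposed). One must also keep the orientation conventions consistent throughout, so that $S_1\geq S_2$ together with $H_1\leq H_2$ indeed makes $w$ a supersolution attaining an interior minimum, rather than the reverse.
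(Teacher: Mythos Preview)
The paper does not supply a proof of this proposition; it is stated as a tool (``the Hopf maximum principle of elliptic equations of divergence type, that in our context of cmc surfaces, we call the tangency principle'') and used throughout without argument. Your proposal is precisely the standard derivation one would give: write both surfaces as graphs over the common tangent plane, subtract the mean curvature equations, linearize via the integral trick to obtain a linear elliptic operator $L$ with no zeroth-order term acting on $w=u_1-u_2$, and then apply the strong maximum principle in the interior case and the Hopf boundary lemma in the boundary case. This is correct and is exactly the argument the paper is implicitly invoking.

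One small remark: you insert the hypothesis ``since $S_1,S_2$ are cmc'' to pass from $H_1\leq H_2$ at $p$ to $H_1\leq H_2$ on all of $\Omega$. The proposition as written only asserts the inequality at $p$, which by itself would not suffice; your reading is the intended one in this paper (every application is to cmc surfaces, or in Theorems~\ref{thw} and \ref{t5} to surfaces with the same prescribed mean-curvature function $\kappa z+\mu$, where the comparison $H_1\leq H_2$ again holds throughout the neighborhood after reflection). So your added assumption is exactly what the authors have in mind, and the argument goes through.
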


A result by the first author on embedded cmc surfaces with boundary in a sphere appears in \cite{lo}, which extends a previous result of Koiso \cite{ko}. Let $B_\rho$ be the open ball bounded by $\s_\rho$ and $E_\rho=\r^3-\overline{B_\rho}$.

\begin{theorem}[\cite{lo}] Let $H\not=0$ and let $S$ be a compact embedded surface in $\r^3$ with constant mean curvature $H$. Assume that $\Gamma=\partial S$ is a simple closed curve included in an open hemisphere of the sphere $\s_{1/|H|}$ and denote by $\Omega\subset \s_{1/|H|}$ the domain bounded by $\Gamma$ in this hemisphere. If $S\cap (\s_{1/|H|}-\overline{ \Omega})=\emptyset$, then either
$S-\Gamma=\Omega$, or $S-\Gamma\subset B_{1/|H|}$ or
$S-\Gamma\subset E_{1/|H|}$.
\end{theorem}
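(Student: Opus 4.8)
The plan is to use the sphere $\s_{1/|H|}$ itself as a barrier, exploiting that it is a totally umbilic surface of constant mean curvature $|H|$, the very value carried by $S$. Throughout write $\s=\s_{1/|H|}$, $B=B_{1/|H|}$, $E=E_{1/|H|}$, and orient $S$ by the unit normal for which its mean curvature is the positive constant $|H|$. Since $S$ is connected with connected boundary the simple closed curve $\Gamma$, the interior $\mathrm{int}(S)=S-\Gamma$ is connected. The hypothesis $S\cap(\s-\overline{\Omega})=\emptyset$ says that $\mathrm{int}(S)$ can meet $\s$ only inside the cap $\Omega$, that is, $\mathrm{int}(S)\cap\s\subset\Omega$. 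The whole statement is then equivalent to the assertion that $\mathrm{int}(S)$ never crosses $\s$. I would first reduce everything to the single claim $(\ast)$: that $\mathrm{int}(S)$ is contained in $\overline{B}$ or in $\overline{E}$. Granting $(\ast)$, if $\mathrm{int}(S)\cap\s=\emptyset$ then by connectedness $\mathrm{int}(S)$ lies in one of the open pieces $B$ or $E$, which are the second and third alternatives; while if $\mathrm{int}(S)\cap\s\neq\emptyset$ the conclusion will be $\mathrm{int}(S)=\Omega$.

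The passage from a one-sided contact to the first alternative is the routine part. Assume $(\ast)$, say $\mathrm{int}(S)\subset\overline{B}$, and let $p\in\mathrm{int}(S)\cap\s\subset\Omega$. Near $p$ the surface $S$ lies in $\overline{B}$ and touches $\s$, so it is tangent to $\s$ at $p$ and lies on its inner side; $S$ and $\s$ are thus two tangent surfaces, one above the other, carrying the same mean curvature $|H|$ after matching normals, and the tangency principle forces them to agree on a neighborhood of $p$. Hence the set of points of $\mathrm{int}(S)$ admitting a neighborhood contained in $\s$ is nonempty, open by definition, and closed by this argument, so by connectedness it is all of $\mathrm{int}(S)$; thus $\mathrm{int}(S)\subset\s$. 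Being an open and closed subset of the cap $\Omega$ with the same frontier $\Gamma$, it must equal $\Omega$, which is the first alternative.

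The heart of the proof, and the step I expect to be the main obstacle, is claim $(\ast)$: excluding the mixed situation in which $\mathrm{int}(S)$ has points both in $B$ and in $E$. The naive comparison at the nearest and farthest points of $S$ from the origin does not suffice, since at such a point $S$ is tangent to a concentric sphere of radius different from $1/|H|$, and the resulting inequality of mean curvatures has the wrong sign to invoke the tangency principle; so embeddedness must be used globally. Here I would argue with the enclosed region. Assuming a transversal crossing, the crossing locus lies in $\Omega$, so the exterior piece $S_E=S\cap\overline{E}$ is a compact embedded surface whose frontier curves lie in $\overline{\Omega}\subset\s$; capping $S_E$ off by the subdomain of $\overline{\Omega}$ that its frontier bounds produces a closed embedded surface enclosing a compact region $W\subset\overline{E}$, whose boundary is assembled from a spherical piece of $\s$ and a piece of the cmc surface $S$. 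Running the Alexandrov reflection on $W$ with planes orthogonal to the axis of the hemisphere that contains $\Gamma$, or directly comparing $\partial W$ against $\s$ by the tangency principle, produces an interior one-sided contact of $S$ with $\s$; by the previous paragraph this forces $\mathrm{int}(S)=\Omega$, contradicting the presence of points of $\mathrm{int}(S)$ in $B$. The symmetric argument applied to $S\cap\overline{B}$ rules out the remaining mixed case and establishes $(\ast)$. The delicate points, and the reason embeddedness and the hemisphere hypothesis are both essential, are that the capped surface fails to be smooth along the gluing curves and that one must show $S\cap\Omega$ consists only of these frontier curves, and not of additional tangential pieces, so that the enclosed region $W$ is genuinely well defined.
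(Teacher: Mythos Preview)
This theorem is not proved in the present paper: it is quoted from the earlier article \cite{lo} and stated here without argument, so there is no ``paper's own proof'' to compare your attempt against. What follows is an assessment of your outline on its own merits.

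Your reduction to claim $(\ast)$ is sound, and the passage from a one-sided tangential contact in $\overline{B}$ to the conclusion $\mathrm{int}(S)=\Omega$ via the tangency principle and unique continuation is correct. Be aware, though, that the case $\mathrm{int}(S)\subset\overline{E}$ is not quite symmetric to the one you wrote out: if at the contact point $p$ the chosen normal $N_S(p)$ points \emph{away} from the origin, then with matched normals the sphere carries mean curvature $-|H|$ while $S$ carries $+|H|$, and the inequality in the tangency principle goes the wrong way. Ruling this orientation out requires a global argument (for instance via the enclosed domain and a farthest-point comparison, as in the proof of Theorem~\ref{t1} here), which you have not supplied.

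The more serious gap is in your treatment of $(\ast)$ itself. You cap off $S_E$ to obtain a region $W\subset\overline{E}$ and then assert that ``running the Alexandrov reflection \dots\ or directly comparing $\partial W$ against $\s$ \dots\ produces an interior one-sided contact of $S$ with $\s$'', but neither mechanism is explained. Reflection through planes orthogonal to the hemisphere axis compares $S$ with its own reflection, not with $\s$, and it is not clear how a contact with $\s$ would emerge; a direct comparison of $S_E$ with $\s$ at an arbitrary point runs into the same orientation obstruction just mentioned. You yourself flag that the capping is delicate because of possible tangential pieces and non-smooth gluing curves. As written, this paragraph is a plausible plan rather than a proof, and the step that genuinely exploits embeddedness together with the hemisphere hypothesis---which is exactly where the content of \cite{lo} lies---remains to be carried out.
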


In this context, we shall prove a general criterion that, under some conditions, the surface inherits the symmetries of its boundary. The next theorem extends to cmc surfaces whose boundary lies on  a sphere, previous results of symmetry by Koiso and Schoen for cmc surfaces with planar boundary \cite{ko,sc}.

\begin{theorem}\label{t1} Let $\rho\not=0$. Let $\Gamma$ be a closed curve, possibly disconnected, included in an open hemisphere of $\s_{\rho}$. Assume that $P$ is a vector plane through the pole of the hemisphere which it is plane of symmetry of $\Gamma$ and $P$ separates $\Gamma$ in two graphs on $P\cap \s_{\rho}$. Suppose that $S$ is a compact embedded surface spanning $\Gamma$ with constant mean curvature $H$ and $|H|\geq 1/\rho$. If $S-\Gamma$ is included in $E_{\rho}$, then $S$ is symmetric with respect to $P$.
\end{theorem}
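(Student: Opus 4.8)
The plan is to prove the symmetry by the Alexandrov reflection method applied to the family of planes parallel to $P$, the maximum principle being supplied by the tangency principle. Choose coordinates so that $P=\{y=0\}$, the pole of the hemisphere lies on $P$, and $\Gamma$ lies in the open hemisphere $\{z>0\}\cap\s_{\rho}$; write $\Gamma^{\pm}=\Gamma\cap\{\pm y>0\}$ and $S^{\pm}=S\cap\{\pm y\geq 0\}$. By hypothesis the reflection $R_P$ in $P$ maps $\s_{\rho}$ to itself and $\Gamma^{+}$ to $\Gamma^{-}$, and each $\Gamma^{\pm}$ is a graph over $P\cap\s_{\rho}$. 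The goal is to show $R_P(S)=S$. For $t\geq 0$ put $\Pi_t=\{y=t\}$, let $S_t^{+}=S\cap\{y\geq t\}$ and let $S_t^{+\ast}$ be its reflection in $\Pi_t$; I will slide $t$ from its maximal value $y_{\max}=\max_S y$ down to $0$.

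First I would start the procedure: for $t$ slightly below $y_{\max}$ the cap $S_t^{+}$ is a small graph over $\Pi_t$, so $S_t^{+\ast}$ lies on one side of $S$, and I decrease $t$. The decisive point is the behaviour of the boundary under reflection. Reflection in $\Pi_t$ with $t>0$ carries $\s_{\rho}$ to the sphere $\s_{\rho}^{t}$ centred at $(0,2t,0)$, and a short computation shows that the portion of $\s_{\rho}^{t}$ with $y<t$ lies in the open ball $B_{\rho}$. Hence the reflected boundary $R_{\Pi_t}(\Gamma\cap\{y>t\})$ lies in $B_{\rho}$, whereas $S-\Gamma\subset E_{\rho}$ keeps $S$ outside $\overline{B_{\rho}}$; so for every $t>0$ the reflected cap $S_t^{+\ast}$ cannot meet $S$ along its reflected boundary, and the sliding can only stop at an interior tangency.

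Next I would rule out an interior first contact at any $t_0>0$. There the tangency principle (with $H_1=H_2=H$) forces $S_{t_0}^{+\ast}$ and $S$ to agree on an open set; since cmc surfaces are real analytic this propagates and makes $S$ invariant under reflection in $\Pi_{t_0}$, whence $\Gamma$ is invariant as well. But then $\Gamma\subset\s_{\rho}\cap\s_{\rho}^{t_0}=\Pi_{t_0}\cap\s_{\rho}$, i.e.\ $\Gamma\subset\{y=t_0\}$ lies entirely on one side of $P$, contradicting that $P$ separates $\Gamma$ into two nonempty graphs. A contact near $\Gamma$ with the spherical cap $\Omega\subset\s_{\rho}$ bounded by $\Gamma$, which serves as a barrier (being itself symmetric in $P$), is excluded by the hypothesis $|H|\geq 1/\rho$: at such a tangency the mean curvatures satisfy $1/\rho\leq|H|$, so the tangency principle would force $S$ to coincide with $\s_{\rho}$, impossible since $S-\Gamma\subset E_{\rho}$. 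Hence the sliding reaches $t=0$.

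Finally, at $t=0$ reflection in $P$ preserves $\s_{\rho}$ and sends $\Gamma^{+}$ onto $\Gamma^{-}$, so $R_P(S^{+})$ and $S^{-}$ share the boundary $\Gamma^{-}\cup(S\cap P)$ with $R_P(S^{+})$ on one side of $S^{-}$; the same argument run for $t\leq 0$ gives the opposite inequality, whence $R_P(S^{+})=S^{-}$ and $S$ is symmetric in $P$. Equivalently, one applies the tangency principle at a point of $\Gamma\cap P$, where the two surfaces are tangent because their boundaries coincide and are local graphs by the graph hypothesis on $\Gamma$. I expect the \emph{main obstacle} to be precisely the boundary control of the previous paragraphs: since reflection in $\Pi_t$ does not preserve $\s_{\rho}$ when $t\neq 0$, one must argue carefully that the reflected cap meets neither $\Gamma$ nor the spherical barrier before $t=0$, and this is exactly where the two hypotheses $S-\Gamma\subset E_{\rho}$ and $|H|\geq 1/\rho$ become indispensable.
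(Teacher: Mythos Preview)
Your approach differs from the paper's in an essential way, and the difference is exactly where your argument breaks down.

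The paper does \emph{not} reflect in planes parallel to $P$. Instead it first reflects in horizontal planes $\Pi(t)=\{z=t\}$ to reach $\{z=0\}$, and then switches to a \emph{rotating} family $Q(t)$ of planes all containing the line $L=\{x=z=0\}$, turning from $\Pi(0)$ to $P$. The crucial feature is that every $Q(t)$ passes through the \emph{center} of $\s_\rho$, so the reflections $\Psi_t$ preserve $\s_\rho$. Together with the graph hypothesis on $\Gamma^\pm$, this guarantees that the reflected piece of $\Gamma$ never meets $\Gamma$ (for $t<\pi$), so the only possible first contact is between interior pieces of $S$, and the tangency principle applies cleanly. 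Before any reflection, the paper also uses the hypothesis $|H|\ge 1/\rho$ in a separate step: by comparing $S$ with spherical caps $C_t$ of radius $t>\rho$ below the equator, it rules out $B_\rho\subset W$ and concludes $W\subset E_\rho$. That is the \emph{only} place $|H|\ge 1/\rho$ enters.

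Your planes $\Pi_t=\{y=t\}$ with $t\ne 0$ do not pass through the center, so reflection does not preserve $\s_\rho$; you correctly observe that the reflected boundary goes into $B_\rho$. But this means the reflected cap $S_t^{+\ast}$ is \emph{never} contained in $\overline W$ once $t<\max_\Gamma y$: it crosses $\partial W$ through the spherical patch $\Omega$ transversally. Your attempt to exclude contact with $\Omega$ via the tangency principle does not work. With $W\subset E_\rho$ the inward normal to $\Omega$ points \emph{away} from the origin, so $\Omega$ has mean curvature $-1/\rho$ with that orientation; the reflected cap lies above $\Omega$ with mean curvature $+H>0>-1/\rho$, which is the wrong inequality for the tangency principle, so no contradiction arises. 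More seriously, once part of $S_t^{+\ast}$ has left $\overline W$, nothing prevents it from re-entering $E_\rho$ through $\s_\rho\setminus\Omega$ and then touching $S$ from the \emph{wrong} side of $\partial W$, where the usual comparison fails. You also never establish $W\subset E_\rho$; without that, you do not even know on which side of $\Omega$ the domain lies.

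In short, the idea of sliding parallel planes is natural but does not survive the boundary analysis here; the paper's device of rotating through planes containing the center is precisely what makes the reflection compatible with $\s_\rho$ and with the graph condition on $\Gamma$.
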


Here we say that $P$ separates $\Gamma$ in two graphs if when we write $\Gamma-(\Gamma\cap P)=\Gamma^+\cup\Gamma^{-}$, being $\Gamma^+$ and $\Gamma^{-}$ the parts of $\Gamma$ on each side of $P$, any circle of $\s_{\rho}$ contained in an orthogonal plane to $P$ and parallel to the axis through the pole of the hemisphere intersects $\Gamma^+$ and $\Gamma^{-}$ once at most.

\begin{proof} After a rigid motion, assume that the plane $P$ is of equation $x=0$, where $(x,y,z)$ are the usual coordinates of $\r^3$, and that the hemisphere of $\s_{\rho}$ is $\s_{\rho}^{+}=\s_{\rho}\cap\{(x,y,z)\in\r^3: z>0\}$.
Let $\Gamma=\Gamma_1\cup\ldots\cup \Gamma_n$ be the decomposition of $\Gamma$ into its connected components and denote by $\Omega_i\subset \s_{\rho}^+$ the domain bounded by $\Gamma_i$ on $\s_{\rho}^+$. Since $\Gamma^+$ and $\Gamma^-$ are graphs on $P\cap\s_{\rho}$ then $\Omega_i\cap\Omega_j=\emptyset$, $i\not=j$. Set
$M=S\cup(\overline{\Omega_1}\cup\ldots\cup\overline{\Omega_n})$, which is a connected compact topological surface without boundary of $\r^3$. Therefore, $M$ bounds a $3$-domain $W\subset\r^3$ by the Alexander duality theorem \cite{gra}. Consider on $S$ the unit normal vector field $N$ pointing towards $W$.

We claim that with this choice of $N$, the mean curvature $H$ of $S$ is positive. Indeed, let $p_0\in S$ be the fairest point  from the origin and let $T$ be the affine tangent plane of $S$ at $p_0$.  Choose on $T$ the unit normal vector that coincides with $N$ at $p_0$. Since $N(p_0)$ points towards $W$, then $N(p_0)=-p_0/|p_0|$. With respect to the reference system given by $N(p_0)$,  $S\geq T$ around $p_0$. The tangency principle gives $H>0$ because $T$ is a minimal surface. This proves the claim.

Since $M\cap B_{\rho}=\emptyset$, we have two possibilities about the domain $W$, namely, $B_{\rho}\subset\r^3-\overline{W}$ or $B_{\rho}\subset W$.

\begin{figure}[hbtp]
\begin{center} \includegraphics[width=.9\textwidth]{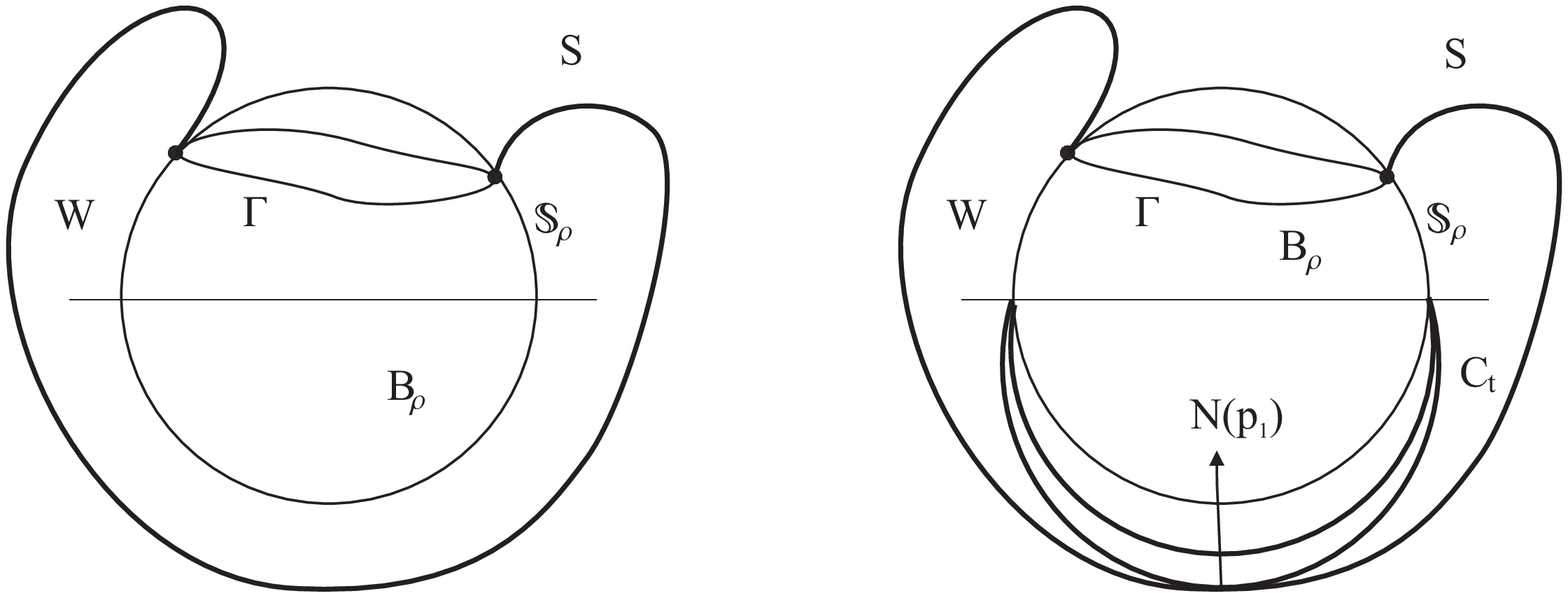}\end{center}
\caption{Left. The case that $B_\rho$ is included in the domain $W$. Right. A contradiction is obtained by comparing $S$ with spherical caps $C_t$.}  \label{fig2}
\end{figure}

We prove that the case $B_\rho\subset W$ is not possible. See Figure \ref{fig2}, left. If this occurs, take the uniparametric family of spherical caps $C_t$ contained in $\overline{E_{\rho}}\cap \{(x,y,z)\in\r^3: z\leq 0\}$, $t\in [\rho,\infty)$, whose boundary is the equator $\s_{\rho}\cap \{(x,y,z)\in\r^3: z=0\}$ and where the parameter $t$ indicates the radius of $C_t$. Let us fix the unit normal vector on $C_t$ pointing towards the center of $C_t$. For $t=\rho$, $C_\rho$ is the lower hemisphere $\s_{\rho}-\s_{\rho}^{+}$. Starting from $t=\rho$, increase the radius of $C_t$. For values $t$ close to $t=\rho$, it holds $int(C_t)\subset W$ by the property that $\Gamma\subset
\s_{\rho}^{+}$. Since $W$ is bounded, we increase $t$ until $C_{t}$ first intersects $S$ at $t=t_1>\rho$, as depicted in Figure \ref{fig2}, right. This must occur at some interior point $p_1$ of both surfaces. The unit normal vectors of $C_{t_1}$ and $S$ agree at $p_1$ because both vectors point towards $W$. Now,  $C_{t_1}\geq S$ around $p_1$ but the mean curvature of $C_{t_1}$ is $1/t_1$ and $1/t_1<1/\rho\leq H$, in contradiction with the tangency principle.

As a consequence, the ball $B_{\rho}$ is included in $\r^3-\overline{W}$, or equivalently, $W\subset E_{\rho}$. Apply  now the Alexandrov reflection method with two families of planes. First, let $\Pi(t)$ be the plane of equation $z=t$ and let $\{\Pi(t): t\in \r\}$ be the family of horizontal planes. Since $W$ is a bounded set, let $t<0$ be so $W\subset \{(x,y,z)\in\r^3: z>t\}$. Next, increase $t$ until $\Pi(t)$ first contacts with $S$ at $t=t_1$, i.e. $\Pi(t)\cap S=\emptyset$ for $t<t_1$ and $\Pi(t_1)\cap S\not=\emptyset$. Then $t_1\leq \bar{t}:=\min\{z(q): q\in\Gamma\}$ and $\bar{t}>0$, where $z(q)$ denotes the $z$-coordinate of $q$. Next, increase $t$ and reflect the part of $S$ below $\Pi(t)$ with respect to $\Pi(t)$. In order to be precise, let us introduce the next notation. Denote by $\Phi_t$ the reflection about $\Pi(t)$. Let
$$S(t)^{-}=\{(x,y,z)\in S: z\leq t\}, \ S(t)^+=\{(x,y,z)\in S: z\geq t\}$$
and $S(t)^*=\Phi_t(S(t)^{-})$.

There are two possibilities about $t_1$: $t_1<\bar{t}$ or $t_1=\bar{t}$.

Assume first that $t_1<\bar{t}$ and so $\Pi(t_1)$ intersects $S$ at some interior point of $S$. As $\Pi(t_1)$ and $S$ are tangent at the touching points, there exists $\epsilon>0$ such that for all $t\in (t_1,t_1+\epsilon)$, we have $int(S(t)^{*})\subset W$ and $S(t)^{*}$ is a graph on some domain of $\Pi(t)$. We keep increasing $t$ and reflecting $S(t)^-$ until the first time $t_2$ such that either $int(S(t_2)^*)\not\subset W$ or $S(t_2)^*$ is not a graph on $\Pi(t_2)$. The existence of $t_2$ is guaranteed since $W$ is bounded. First, assume that $t_2\leq 0$. There are two possibilities:
\begin{enumerate}
\item There exists a point $p_2\in int((S(t_2)^*)\cap S(t_2)^{+}$. Then $\Phi_{t_2}(q_2)=p_2$ for some $q_2\in S(t_2)^{-}$, with $q_2\not=p_2$. In particular, and by the reflection process, the vertical open segment joining $q_2$ with $p_2$ lies in $W$. Then $p_2\not\in \Gamma$ because $B_{\rho}\subset\r^3-\overline{W}$. This implies that $p_2$ is an interior point of $S(t_2)^+$. Thus $S(t_2)^*$ and $S(t_2)^{+}$ are tangent at $p_2$ and $S(t_2)^*\geq S(t_2)^+$ around $p_2$ since $N$ points towards $W$. Then the tangency principle assures that $S(t_2)^+=S(t_2)^*$ in an open set of $p_2$.

\item There exists $p_2\in \partial S(t_2)^*\cap \partial S(t_2)^{+}$. As $t_2\leq 0<\bar{t}$, $p_2\not\in \Gamma$. Then $S(t_2)^*$ and $S(t_2)^{+}$ are tangent at $p_2$, where $p_2\in \Pi(t_2)$, the tangent plane $T_p S$ is orthogonal to $\Pi(t_2)$ and the boundaries $\partial S(t_2)^+$, $\partial S(t_2)^*$ are tangent at $p_2$. Since $S(t_2)^+$ and $S(t_2)^*$ are graphs in some domain of $T_pS \cap \{(x,y,z)\in\r^3: z\geq t_2\}$, the boundary version of the tangency principle implies that $S(t_2)^{+}=S(t_2)^*$ in an open set of $p_2$.
\end{enumerate}
In both cases, denote by $K^{*}$ the connected component of $S(t_2)^*$ which contains $p_2$. Then $K^{*}$ is the reflection about $\Pi(t_2)$ of a certain connected component $K$ of $S(t_2)^{-}$, that is, $K^{*}=\Phi_{t_2}(K)$. As $S(t_2)^*\cap \Gamma=\emptyset$, we repeat the argument with the tangency principle and we derive that $K^{*}\cap S$ is an open and closed subset of $K^{*}$. Since $K^{*}$ is connected, $K^{*}\cap S=K^{*}$. Hence $K^{*}$ is contained in $S$. Since $S(t_2)^*\cap\Gamma=\emptyset$, it follows that $K^{*}\subset S-\Gamma$. Therefore $\partial K$ coincides with $\partial K^{*}$. This proves that $\overline{K\cup K^{*}}$ is a compact surface without boundary contained in $S$ and this implies that it coincides with $S$, which contradicts the assumption that $\partial S=\Gamma$.

The above reasoning implies that the reflection process with horizontal planes $\Pi(t)$ arrives until the plane $\Pi(0)$ and, furthermore, $S(0)^*$ lies in $W$.

Once arrived here, and in the reflection process, we change the family of planes with respect to we make the reflections.  Denote by $Q(t)$ the plane orthogonal to $(\cos(t),0,\sin(t))$ for $t\in [\pi/2,\pi]$ whose equation is $\cos(t)x+\sin(t)z=0$. Let $\{Q(t): t\in[\pi/2,\pi]\}$. All planes $Q(t)$ have a common straight line $L$, namely, of equations $x=z=0$. The plane $Q(\pi/2)$ coincides with $\Pi(0)$ and $Q(\pi)$ coincides with $P$. See Figure \ref{fig3}. Let $\Psi_t$ be the reflection about $Q(t)$. Similarly as with the planes $\Pi(t)$, let us introduce the next notation.
$$S_Q(t)^{-}=\{(x,y,z)\in S: \cos(t)x+\sin(t)z\leq 0\}$$
$$S_Q(t)^+=\{(x,y,z)\in S: \cos(t)x+\sin(t)z\geq  0\}$$
and $S_Q(t)^*=\Psi_t(S_Q(t)^-)$, the reflection of $S_Q(t)^{-}$ about $Q(t)$.  Also, let $\Gamma^-=\Gamma\cap \{(x,y,z)\in\r^3: x> 0\}$ and $\Gamma^+=\Gamma\cap \{(x,y,z)\in\r^3: x<0\}$.

The idea now is to follow the Alexandrov reflection method using the planes $Q(t)$ starting at $t=\pi/2$ and showing that we reach the value $t=\pi$ in the reflection process. First, remark some properties of the reflections $\Psi_t$.

\begin{enumerate}
\item Since $\Psi_{\pi}$ is the reflection about the plane $P$, we have $\Psi_{\pi}(\Gamma^{-})=\Gamma^{+}$.
\item It is possible to replace the family $\Pi(t)$ by $Q(t)$. Indeed, for $t=\pi/2$, the plane $Q(\pi/2)$ is $\Pi(0)$, the surface $S(0)^*$ coincides with $S_Q(0)^*$ and thus, $S_Q(0)^*$ is included in $W$.
\item The reflections about $Q(t)$ leave $\s_{\rho}$ invariant.
\item For each $t\in [\pi/2,\pi)$, if $Q(t)$ contains a point $p\in\Gamma$, then $p\in\Gamma^{-}$. Moreover, the set $\{\Psi_t(p): t\in [\pi/2,\pi)\}$ describes a piece of a half circle in $\s_{\rho}^{+}$ contained in a plane orthogonal to $P$, parallel to the $z$-axis and intersecting $\Gamma$ exactly at one point, namely, $p\in\Gamma^-$. This is due to the fact that $\Gamma^-$ and $\Gamma^{+}$ are graphs on $P\cap\s_{\rho}$.
\end{enumerate}

Let us increase $t\nearrow \pi$ until the first time $t_3$ for which $S_Q(t_3)^*$ leaves to be included in $W$ or   leaves to be a graph on $Q(t_3)$.

We claim that $t_3=\pi$. The argument is by contradiction and let us assume $t_3<\pi$. Then $S_Q(t_3)^*$ intersects $S_Q(t_3)^{+}$ at a first point $p_3$. This point $p_3$ cannot belong to $\Gamma^{+}$ because in a such case,  $p_3=\Psi_{t_3}(q_3)$ for some $q_3\in \s_\rho\cap S_Q(t_3)^{-}$ and then necessarily $q_3\in\Gamma^{-}$. This is a contradiction because $t_3<\pi$ and $Q(t_3)$ is not a plane of symmetry of $\Gamma$. This proves that $p_3$ is a common interior point of $S_Q(t_3)^+$ and $S_Q(t_3)^{*}$ or a boundary point of both surfaces. In particular, $S_Q(t_3)^{*}$ and $S_Q(t_3)^{+}$ are tangent at $p_3$ and $S_Q(t_3)^{*}\geq S_Q(t_3)^{+}$ around  $p_3$. An argument using the tangency principle, similar as in the part of reflections with the planes $\Pi(t)$, implies that $Q(t_3)$ is a plane of symmetry of $S$, in particular, of $\Gamma$: a contradiction because $t_3<\pi$.

\begin{figure}[hbtp]
\begin{center} \includegraphics[width=.6\textwidth]{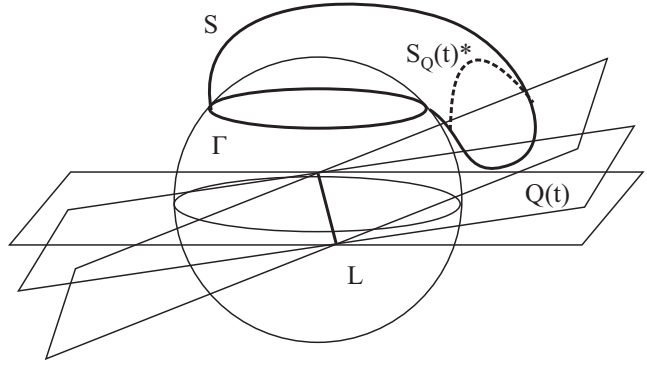}\end{center}
\caption{Proof of Theorem \ref{t1}. Reflections about the planes $Q(t)$. }\label{fig3}
\end{figure}

After the claim, we have proved $t_3=\pi$. There are two possibilities. First possibility is that
$S_Q(\pi)^*-\Gamma^{-}$ is completely included in $W$. In such case, we can do an analogous process of reflection about the planes $Q(t)$ with $t\in [0,\pi/2]$ starting from $t=\pi/2$ and letting $t\searrow 0$. The same argument and the fact that $S_Q(\pi)^*-\Gamma^{-}
\subset W$ would imply the existence of $t_4\in (0,\pi/2)$ such that $Q(t_4)$ is a plane of symmetry of $S$, which is not possible. Therefore, the only possibility is that $S_Q(\pi)^*-\Gamma^{-}$ touches $\partial W$ at some point, which necessarily belongs to $S_Q(\pi)^{-}$. The tangency principle implies that $Q(\pi)=P$, proving the theorem.

The reasoning carried out until here assumed that $t_1<\bar{t}$. The other possibility is that $t_1=\bar{t}$ or in other words, by displacing $\Pi(t)$ upwards vertically, we do not intersect $S$ until $\Pi(t)$ arrives to the value $t=\bar{t}$, the lowest height of $\Gamma$. Since $\bar{t}>0$, we move back $\Pi(t)$ until the position $t=0$ and we begin the reflection method using the planes $Q(t)$ as above. The difference with respect to the above process initiated with the planes $Q(t)$ is that in the starting point, we now have $Q(\pi/2)\cap S=\emptyset$. Anyway, let us increase $t$ until we arrive the first point $t_1\in (\pi/2,\pi)$ of contact between $Q(t_1)$ and $S$. Next, continue with $t$ and reflecting about $Q(t)$ until the first time $t_2$ that $S_Q(t_2)^*$ leaves the domain $W$ or $S_Q(t_2)^{*}$ leaves to be a graph on $S_Q(t_2)$. Now the proof follows the same steps, proving the result. $\Box$
\end{proof}

In the case that $\Gamma$ is a circle, we conclude

\begin{corollary} \label{cor1} Let $H\not=0$ and let $\Gamma$ be a circle with radius $r>0$, $r\leq 1/|H|$. Suppose that $S$ is a compact embedded surface spanning $\Gamma$ and with constant mean curvature $H$ such that $S-\Gamma$ is included in $E_{1/|H|}$, where $\s_{1/|H|}$ is any of the two spheres of radius $1/|H|$ containing $\Gamma$. Then $S$ is a spherical cap.
\end{corollary}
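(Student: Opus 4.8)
The plan is to reduce the statement to Theorem~\ref{t1} by exploiting that a circle admits an entire pencil of symmetry planes, and then to identify a rotationally symmetric cmc surface carrying a single circular boundary as a spherical cap.

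First I would normalize by setting $\rho=1/|H|$, so that $\Gamma\subset\s_\rho$ and the hypothesis $S-\Gamma\subset E_{1/|H|}=E_\rho$ reads exactly as in Theorem~\ref{t1}, while $|H|=1/\rho$ makes the curvature hypothesis $|H|\geq 1/\rho$ hold (with equality). Assuming first $r<1/|H|$, the circle $\Gamma$ lies in the plane at distance $d=\sqrt{\rho^2-r^2}>0$ from the origin, hence inside the open hemisphere of $\s_\rho$ whose pole is the point where the axis $L$ of $\Gamma$ meets $\s_\rho$ on that side. Here $L$ is the line through the center of $\Gamma$ orthogonal to its plane; since the foot of the perpendicular from the origin to that plane is precisely the center of $\Gamma$, the axis $L$ passes through the origin and through the pole of the hemisphere.

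Next I would feed into Theorem~\ref{t1} every vector plane $P$ containing $L$. Each such $P$ passes through the pole, is a plane of symmetry of the circle $\Gamma$, and separates $\Gamma$ into two arcs $\Gamma^{\pm}$: choosing coordinates so that $L$ is the $z$-axis and $P=\{x=0\}$, the circles of $\s_\rho$ lying in planes orthogonal to $P$ and parallel to $L$ are exactly the sections $\{y=c\}$, and each meets $\Gamma$ in at most one point on each side of $P$, so the ``two graphs'' condition is satisfied. Theorem~\ref{t1} then yields that $S$ is symmetric with respect to $P$. Letting $P$ range over the whole pencil of planes through $L$, I would compose two such reflections: the product of the reflections in two planes through $L$ meeting at angle $\theta$ is the rotation about $L$ of angle $2\theta$, and varying $\theta$ shows that $S$ is invariant under every rotation about $L$, i.e. $S$ is a surface of revolution with axis $L$.

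Finally I would invoke the classification of rotational cmc surfaces. A surface of revolution with constant mean curvature $H\neq 0$ is a piece of a Delaunay surface (sphere, cylinder, unduloid or nodoid). Since $S$ is compact and embedded with boundary the single circle $\Gamma$, its profile curve in the half-plane bounded by $L$ has one endpoint at distance $r$ from $L$ (on $\Gamma$) and, there being no second boundary circle, its other endpoint must lie on the axis $L$; among Delaunay profiles only the circular arc (the sphere) reaches the axis, so $S$ is a spherical cap. The main obstacle I anticipate is twofold: verifying carefully that the \emph{full} pencil of planes through $L$ meets the separation hypothesis of Theorem~\ref{t1} (so that genuine rotational symmetry, not merely a discrete symmetry, is produced), and treating the degenerate endpoint $r=1/|H|$, where $\Gamma$ is a great circle and hence fails to lie in any open hemisphere; this last case seems to require a limiting argument, applying the result for $r'<1/|H|$ and letting $r'\nearrow r$, or a separate direct treatment.
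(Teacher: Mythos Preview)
Your approach is essentially the one in the paper: set $\rho=1/|H|$, apply Theorem~\ref{t1} to every vector plane through the axis $L$, deduce full rotational invariance, and finish with Delaunay's classification. The verification of the ``two graphs'' hypothesis and the passage from the pencil of reflection planes to full rotational symmetry are exactly what the paper does (more tersely).

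The one point where you diverge, and where there is a genuine gap, is the endpoint $r=1/|H|$. Your proposed limiting argument cannot work: the surface $S$ and its boundary $\Gamma$ are \emph{given}, so there is nothing to vary and no sequence of surfaces with $r'<1/|H|$ to which you could apply the result. The paper instead disposes of this case by an external reference: when $r=1/|H|$ the circle $\Gamma$ is a great circle of $\s_{1/|H|}$, and a theorem of Brito and Earp asserts that any compact embedded cmc-$H$ surface bounded by such a great circle is a hemisphere of radius $1/|H|$; but a hemisphere of $\s_{1/|H|}$ lies on $\s_{1/|H|}$, not in $E_{1/|H|}$, contradicting the hypothesis. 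Hence the case $r=1/|H|$ is vacuous, and only $r<1/|H|$ needs to be argued---which you do correctly.
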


\begin{proof} If $r=1/|H|$, $S$ is a hemisphere of radius $1/|H|$ by a result of Brito and Earp \cite{br}. But this hemisphere is not included in $E_{1/|H|}$ and this case is not possible. Therefore, $r<1/|H|$. Let $\Pi$ be the plane through the center of $\s_{1/|H|}$ and parallel to the one containing $\Gamma$. Then $\Pi$ defines a unique open hemisphere containing $\Gamma$. Let $L$ be the straight line orthogonal to $\Pi$ through the center of $\Gamma$. Apply Theorem \ref{t1} for any plane containing $L$, which is a plane of symmetry of $\Gamma$ and separates $\Gamma$ in two graphs. Then we conclude that $S$ is invariant by the group of rotations with axis $L$. This shows that $S$ is a surface of revolution. By the classification of Delaunay \cite{de}, the only compact rotational non-zero cmc surface spanning a circle is a spherical cap. $\Box$
\end{proof}

As an example, Corollary \ref{cor1} says that the surface that appears in Figure \ref{fig1}, right, is not possible, where $H$ is the (constant) mean curvature of $S$.

In the proof of Theorem \ref{t1}, notice that the assumption $S-\Gamma\subset E_{\rho}$ has been used before to the Alexandrov method in order to prove that $B_{\rho}$ is not included in $W$. Therefore, in the next result we drop the assumption on the value of the radius of the sphere.

\begin{corollary}\label{cor2} Let $\Gamma$ be a closed curve, possibly disconnected, included in an open hemisphere of a sphere $\s_\rho$ and assume that $P$ is a plane of symmetry of $\Gamma$ which separates $\Gamma$ in two graphs defined on $P\cap \s_\rho$. Suppose that $S$ is a compact embedded cmc surface spanning $\Gamma$ and $S$ satisfies one of the following two hypothesis:
\begin{enumerate}
\item The surface $S-\Gamma$ is included in $B_\rho$.
\item The bounded domain $W$ defined by $S\cup(\overline{\Omega}_1\cup\ldots\cup\overline{\Omega}_n)$ is included in $E_\rho$.
\end{enumerate}
 Then $S$ is symmetric with respect to the plane $P$. In the case that $\Gamma$ is a circle, the surface is a spherical cap.
\end{corollary}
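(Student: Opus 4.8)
The plan is to reduce both cases to the Alexandrov reflection argument already carried out in the proof of Theorem \ref{t1}, using the observation in the preceding remark that the hypotheses of that theorem entered its proof only to locate the domain $W$ relative to $\s_\rho$, i.e. to rule out $B_\rho\subset W$. Once the side on which $W$ lies is known, the two-family reflection (first the horizontal planes $\Pi(t)$ down to $\Pi(0)$, then the rotating planes $Q(t)$ from $t=\pi/2$ to $t=\pi$) uses only that these planes leave $\s_\rho$ invariant and that $P$ separates $\Gamma$ into two graphs on $P\cap\s_\rho$. In particular that part of the argument compares $S$ with its own reflected copy, so the tangency principle is applied with equal mean curvatures and the sign of $H$ plays no role. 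Thus in each case I only need to establish the location of $W$ and then quote the reflection portion of Theorem \ref{t1}.

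For hypothesis (2) this is immediate: the assumption $W\subset E_\rho$ is exactly the statement $B_\rho\subset\r^3-\overline{W}$ that the comparison with the caps $C_t$ produced in Theorem \ref{t1}. Hence I skip that step entirely, thereby avoiding any use of the radius condition $|H|\geq 1/\rho$, and run the reflection argument verbatim from that point on, obtaining symmetry of $S$ with respect to $P$.

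For hypothesis (1) I first observe that $S-\Gamma\subset B_\rho$, together with $\Gamma\subset\s_\rho^{+}$ and $\overline{\Omega}_i\subset\overline{\s_\rho^{+}}$, forces $M=S\cup(\overline{\Omega}_1\cup\ldots\cup\overline{\Omega}_n)\subset\overline{B_\rho}$, so that the bounded component $W$ satisfies $W\subset B_\rho$; in particular $B_\rho\not\subset W$ holds automatically and no radius hypothesis is needed. I then rerun the same reflection scheme in this \emph{interior} configuration. The reflection planes still leave $\s_\rho$ invariant and carry the spherical caps $\Omega_i$ among themselves, and at each tangential contact I again compare $S$ with its reflected copy through the tangency principle. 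The delicate points are precisely the places where Theorem \ref{t1} invoked $B_\rho\subset\r^3-\overline{W}$ to exclude a contact point lying on $\Gamma$; here I replace that by the dual fact $W\subset B_\rho$, noting that a vertical (respectively $Q(t)$-reflected) open segment contained in $W\subset B_\rho$ cannot have an endpoint on $\Gamma\subset\s_\rho$ for the same separation reason, since $\Gamma$ lies in the open upper hemisphere and the reflections act through planes meeting $\s_\rho$. Carrying this through yields, as before, that $Q(\pi)=P$ is a plane of symmetry of $S$. Finally, when $\Gamma$ is a circle I argue exactly as in Corollary \ref{cor1}: letting $L$ be the axis of $\Gamma$, every plane containing $L$ is a plane of symmetry of $\Gamma$ separating it into two graphs, so the symmetry conclusion applies to all of them; hence $S$ is invariant under the full rotation group about $L$, is a surface of revolution, and by Delaunay's classification \cite{de} must be a spherical cap.

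I expect the main obstacle to be the interior case (1): one must check that every comparison in the reflection of Theorem \ref{t1} — especially the exclusion of boundary contact points and the verification that reflected pieces of $S$ remain in $\overline{W}$ — stays valid when $W$ lies inside $\s_\rho$ rather than outside it, i.e. that the roles of $B_\rho$ and $E_\rho$ may be interchanged without reversing the direction of any tangency-principle inequality.
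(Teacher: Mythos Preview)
Your proposal is correct and follows essentially the same approach as the paper's proof: both observe that the hypothesis $|H|\geq 1/\rho$ in Theorem~\ref{t1} was used only to rule out $B_\rho\subset W$, so under either assumption of the corollary one may skip that step and run the two-family reflection argument $\Pi(t)\to Q(t)$ unchanged. The paper's key remark for case~(1)---that reflected points of $S\cap B_\rho$ remain in $B_\rho$ and hence cannot touch $\Gamma\subset\s_\rho$---is exactly the ``dual fact $W\subset B_\rho$'' you invoke to exclude boundary contacts, and the circle case via Delaunay is identical to Corollary~\ref{cor1}.
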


\begin{proof} By an isometry in $\r^3$, we may assume that $P=\{(x,y,z)\in\r^3: x=0\}$ and the open hemisphere is $\s^{+}_{\rho}$. The Alexandrov method works as in Theorem \ref{t1} beginning with the horizontal planes $\Pi(t)$. We only sketch the proof remarking the differences and only for the case $S-\Gamma\subset B_\rho$. We can arrive at least until $t=0$ and no contact points appear between $S(t)^*$ and $S(t)^+$: on the contrary, the tangency principle would give a plane of symmetry $\Pi(t_2)$ for some $t_2\leq 0$, which is not possible. In this part of the proof, we point out that it is not possible that a point of $S(t)^*$ touches a point of $\Gamma$ because the reflection of points of $S\cap B_\rho$ lies in $B_\rho$. Once arrived at $t=0$, use reflections about the planes $Q(t)$, obtaining the result. $\Box$
\end{proof}

We get another consequence of the proof of Theorem \ref{t1}. For this aim, we need a result that will be also useful in the rest of this work.

\begin{proposition}\label{pr1} Let $S$ be a compact embedded surface whose boundary lies in an open hemisphere of a sphere $\s_\rho$. Suppose that $S-\partial S$ is included in $E_\rho$ or in $B_\rho$. Then there exists a subset $\Omega\subset\s^+$ such that $S\cup \Omega$ is a closed embedded surface of $\r^3$. In particular, $S\cup\Omega$ defines a bounded $3$-domain $W\subset\r^3$.
\end{proposition}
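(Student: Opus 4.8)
The plan is to show that the boundary curve $\Gamma=\partial S$, lying in the open hemisphere $\s_\rho^+$, bounds a well-defined domain $\Omega$ on $\s_\rho^+$ that can be glued to $S$ to produce a closed embedded surface. First I would decompose $\Gamma=\Gamma_1\cup\cdots\cup\Gamma_n$ into its connected components; each $\Gamma_i$ is a simple closed curve contained in the open hemisphere $\s_\rho^+$. Since $\s_\rho^+$ is an open topological disk (hence a subset of $\r^2$ up to diffeomorphism), the Jordan curve theorem applies: each $\Gamma_i$ bounds a unique relatively compact domain $\Omega_i\subset\s_\rho^+$. I would then set $\Omega=\overline{\Omega_1}\cup\cdots\cup\overline{\Omega_n}$, taking care to address how these pieces fit together when the $\Omega_i$ are nested or disjoint. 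The key topological input is that gluing $S$ to $\Omega$ along $\Gamma=\partial S=\partial\Omega$ yields a compact topological surface $M=S\cup\Omega$ without boundary.

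The central point is to verify that $M$ is \emph{embedded}, i.e. that $S$ and $\Omega$ meet only along their common boundary $\Gamma$ and do not otherwise intersect. This is where the hypothesis $S-\partial S\subset E_\rho$ (or $S-\partial S\subset B_\rho$) is essential. Since $\Omega\subset\s_\rho^+\subset\s_\rho$ while the interior of $S$ lies strictly outside the closed ball $\overline{B_\rho}$ (in the $E_\rho$ case) or strictly inside the open ball (in the $B_\rho$ case), the two sets are separated by the sphere $\s_\rho$ away from $\Gamma$. Formally, $(S-\Gamma)\cap\s_\rho=\emptyset$, whereas $\Omega\subset\s_\rho$, so $(S-\Gamma)\cap\Omega=\emptyset$ and the only common points are on $\Gamma$. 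Along $\Gamma$ itself one checks that $S$ and $\Omega$ attach cleanly to form a topological manifold; since $S$ is a smooth surface with boundary and $\Omega$ is a smooth region of $\s_\rho^+$ sharing the boundary $\Gamma$, the union is a compact topological surface without boundary.

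Once $M=S\cup\Omega$ is established as a connected compact embedded surface without boundary in $\r^3$, I would invoke the Alexander duality theorem (as already used in the proof of Theorem \ref{t1}, citing \cite{gra}) to conclude that $M$ separates $\r^3$ into a bounded component and an unbounded component. The bounded component is the desired $3$-domain $W\subset\r^3$ with $\partial W=M$. This gives exactly the conclusion that $S\cup\Omega$ defines a bounded $3$-domain $W$.

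I expect the main obstacle to be the careful treatment of connectedness and the nesting structure when $\Gamma$ is disconnected. When the components $\Gamma_i$ are nested on the hemisphere, the regions $\Omega_i$ overlap and $\Omega$ must be defined as the appropriate symmetric-difference-type region so that $\partial\Omega=\Gamma$ exactly; moreover one must ensure that the resulting $M$ is \emph{connected} in order for Alexander duality to yield a single bounded domain. In the setting of Theorem \ref{t1} the curves project injectively via the graph condition, forcing $\Omega_i\cap\Omega_j=\emptyset$ for $i\ne j$, which sidesteps the nesting issue; here, absent that assumption, a short argument is needed to show the pieces assemble into one closed surface (or, if $\Gamma$ is disconnected into non-nested components, that $\Omega$ is the union of the disjoint $\overline{\Omega_i}$ and $M=S\cup\Omega$ is still connected through $S$). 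The separation argument via $\s_\rho$ is then routine given the hypothesis on the location of $S-\partial S$.
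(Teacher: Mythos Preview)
The paper does not actually prove Proposition \ref{pr1}; it states the result and follows it only with a remark that the statement holds more generally when $\Sigma$ bounds a simply connected domain with one end. Your outline supplies exactly the standard argument the paper leaves implicit: apply the Jordan curve theorem in the open hemisphere (a topological disk) to each component $\Gamma_i$ of $\partial S$, cap $S$ with the resulting spherical region $\Omega$, use the hypothesis $S-\partial S\subset E_\rho$ (or $B_\rho$) to ensure $(S-\Gamma)\cap\s_\rho=\emptyset$ so that $S\cup\Omega$ is embedded, and then invoke Alexander duality (as in the proof of Theorem \ref{t1}) to obtain the bounded $3$-domain $W$. This is correct and is precisely the construction the paper carries out in the special case of Theorem \ref{t1}.

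You are also right that the only genuine subtlety is the nesting of the $\Omega_i$ when $\Gamma$ is disconnected: in Theorem \ref{t1} the graph hypothesis forces $\Omega_i\cap\Omega_j=\emptyset$, but in the generality of Proposition \ref{pr1} one must take $\Omega$ to be the region of points lying inside an odd number of the $\Gamma_i$ (your ``symmetric-difference-type region'') so that $\partial\Omega=\Gamma$ exactly. Connectedness of $M=S\cup\Omega$ then follows from connectedness of $S$, since every component of $\Omega$ meets $S$ along part of $\Gamma$.
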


In fact the result is more general and it holds for compact embedded orientable surfaces such that $\partial S\subset \Sigma$, where $\Sigma$ is the boundary of a simply connected domain $D$ of $\r^3$ with one end and $S-\partial S\subset D$. The difference in Proposition \ref{pr1} is that there are two domains, one is determined by $S\cup\Omega$ and the other one by $S\cup(\s_\rho- \Omega)$.

\begin{corollary} Let $S$ be a compact embedded surface of constant mean curvature $H$, $H\not=0$. Assume that $\partial S$ is included in an open hemisphere of a sphere $\s_{\rho}$ with $|H|\geq 1/\rho$. If $S-\Gamma\subset E_{\rho}$, then the domain $W$ defined in Proposition \ref{pr1} is included in $E_{\rho}$.
\end{corollary}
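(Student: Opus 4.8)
The plan is to reduce this to the dichotomy already resolved inside the proof of Theorem \ref{t1}. First I would invoke Proposition \ref{pr1} to obtain the region $\Omega\subset\s^+$ for which $M=S\cup\Omega$ is a closed embedded surface bounding the bounded $3$-domain $W$. Since $S-\Gamma\subset E_\rho$ while $\Gamma\subset\s_\rho$ and $\Omega\subset\s_\rho^+\subset\s_\rho$, the whole of $M$ lies in $\overline{E_\rho}$, so that $M\cap B_\rho=\emptyset$. Because $B_\rho$ is connected and disjoint from $\partial W=M$, it must lie entirely on one side of $M$: either $B_\rho\subset W$ or $B_\rho\subset\r^3-\overline{W}$. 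The second alternative is precisely the assertion $W\subset E_\rho$, so the corollary amounts to excluding the first.

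To exclude $B_\rho\subset W$, I would orient $S$ by the unit normal $N$ pointing towards $W$ and first record, exactly as in Theorem \ref{t1}, that with this orientation $H>0$: evaluating the tangency principle at the point $p_0\in S$ farthest from the origin, where the affine tangent plane $T$ satisfies $S\geq T$ and $N(p_0)=-p_0/|p_0|$, forces $H>0$ since $T$ is minimal. This is what lets the comparison with spheres of radius $1/|H|\leq\rho$ run in the correct direction.

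The heart of the argument is the sweeping family of spherical caps $C_t\subset\overline{E_\rho}\cap\{(x,y,z)\in\r^3: z\leq 0\}$, $t\in[\rho,\infty)$, each having the equator $\s_\rho\cap\{z=0\}$ as boundary and $t$ as radius, oriented by the normal pointing to their center. Assuming $B_\rho\subset W$, for $t$ slightly larger than $\rho$ the interior of $C_t$ lies in $W$ (here one uses $\Gamma\subset\s_\rho^+$, so the caps, living in the lower closed half, stay clear of $\Gamma$), and since $W$ is bounded there is a first radius $t_1>\rho$ at which $C_{t_1}$ touches $S$. This contact is necessarily at an interior point $p_1$ of both surfaces, the normals agree there because both point into $W$, and $C_{t_1}\geq S$ around $p_1$; but the mean curvature of $C_{t_1}$ equals $1/t_1<1/\rho\leq H$, contradicting the tangency principle. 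Hence $B_\rho\subset W$ is impossible and $W\subset E_\rho$ follows.

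The only delicate points I anticipate are bookkeeping rather than conceptual: checking that the initial caps $C_t$ near $t=\rho$ genuinely have interior inside $W$, and, crucially, that the first touching point $p_1$ is interior to both $S$ and $C_{t_1}$ rather than lying on $\Gamma$ or on the common boundary equator. Both follow from $\Gamma\subset\s_\rho^+$ together with $M\subset\overline{E_\rho}$, which keep the lower caps separated from the boundary and confined to $\overline{E_\rho}$, so that a tangential contact can only occur at genuine interior points where the tangency principle applies with agreeing orientations.
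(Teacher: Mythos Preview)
Your argument is correct and is precisely the paper's own: reduce to the dichotomy $B_\rho\subset W$ versus $B_\rho\subset\r^3-\overline{W}$, orient by the inward normal to get $H>0$ via the farthest point, and then sweep with the spherical caps $C_t$ from Theorem~\ref{t1} to rule out $B_\rho\subset W$ by the tangency principle and the inequality $1/t_1<1/\rho\leq H$. The paper's proof is simply a terse pointer back to that part of Theorem~\ref{t1}, and you have faithfully unpacked it.
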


\begin{proof} If $W\not\subset E_{\rho}$, then $B_{\rho}\subset W$. The first part of the proof of Theorem \ref{t1} holds proving that, according to the unit normal vector of $S$ pointing towards $W$, the mean curvature $H$ is positive. Using the spherical caps $C_t$ that appeared there, a similar argument of comparison yields a contradiction. $\Box$
\end{proof}

 %%%%%%%%%%%%%%%%%%%%%%%%%%%%%%%%%%%%%%%%%%%%%%%%%%%%%%%%%%%%%
\section{Capillary surfaces with boundary in a sphere}\label{sec3}
%%%%%%%%%%%%%%%%%%%%%%%%%%%%%%%%%%%%%%%%%%%%%%%%%%%%%%%%%%%%

By a \emph{capillary surface} on a support surface $\Sigma$ we mean a compact embedded surface $S$ of constant mean curvature whose boundary lies on $\Sigma$ and the angle $\gamma$ between $S$ and $\Sigma$ along $\partial S$ is constant. If the boundary curve is included in a plane $\Sigma$ and $S$ lies on one side of $\Sigma$, then $S$ is a spherical cap. In fact, Wente proved a more general result. Consider  the usual coordinates $(x,y,z)$ of $\r^3$ and denote by $z(p)$ the third coordinate of a point $p\in\r^3$.

\begin{theorem}[\cite{we0}]\label{thw} Let $\kappa,\mu\in\r$. Assume that $S$ is a compact embedded surface with mean curvature $H(x,y,z)=\kappa z+\mu$ whose boundary lies in a horizontal plane $\Sigma$ and $S$ meets $\Sigma$ with constant angle. If $S$ lies on one side of $\Sigma$, then $S$ is a surface of revolution. Moreover, any nonempty intersection of $S$ with a horizontal plane is a circle with center on the axis.
\end{theorem}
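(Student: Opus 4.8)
The plan is to run the Alexandrov reflection method, but using only \emph{vertical} planes, and then to read off rotational symmetry from the fact that one obtains a plane of symmetry in every horizontal direction. After a rigid motion I may assume $\Sigma=\{(x,y,z)\in\r^3:z=0\}$ and $S\subset\{z\geq 0\}$. Since $S$ is compact and embedded, lies on one side of $\Sigma$, and has $\partial S\subset\Sigma$, the surface together with the planar region $\Omega\subset\Sigma$ that it cuts off bounds a $3$-domain $W$ (the planar analogue of Proposition \ref{pr1}); I orient $S$ by the unit normal $N$ pointing into $W$. The decisive structural remark is that the prescribed mean curvature $H(x,y,z)=\kappa z+\mu$ depends on the point only through the height $z$. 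Hence $H$ is invariant under every horizontal translation and under every reflection about a vertical plane, but \emph{not} under reflection about a horizontal plane. This is exactly why one cannot double $S$ across $\Sigma$ and why the method can only be pushed with vertical planes; it is also the reason the resulting axis of revolution will be vertical.

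Fix a horizontal unit vector $v$ and consider the family of vertical planes $P_s=\{p\in\r^3:\langle p,v\rangle=s\}$. Starting from $s$ so large that $P_s\cap S=\emptyset$, I decrease $s$ and, for the portion $S_s^{+}=S\cap\{\langle p,v\rangle\geq s\}$, I form its reflection $\Phi_s(S_s^{+})$ about $P_s$ and compare it with $S_s^{-}=S\cap\{\langle p,v\rangle\leq s\}$. Because $\Phi_s$ fixes $\Sigma$ setwise and preserves the height $z$, it preserves the mean curvature function $H$, so the reflected piece is again a surface with the same prescribed mean curvature, oriented compatibly with $N$. I let $s$ decrease to the first value $s(v)$ at which the reflected piece either ceases to lie inside $W$ or fails to be a graph over $P_s$. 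At this critical position the reflected surface must touch $S$, either (i) at a common interior point, or (ii) at a point of $\partial S\subset\Sigma$. In case (i) the interior tangency principle applies directly. In case (ii) I invoke the boundary version of the tangency principle: here the constant contact angle is essential, for it guarantees that at the touching point both $S_s^{-}$ and $\Phi_s(S_s^{+})$ meet $\Sigma$ at the same angle $\gamma$, so that the two sheets are tangent, their boundary curves are tangent, and both are graphs over a common neighborhood of the common tangent plane. In either case the tangency principle forces $S$ to coincide with its reflection, and hence $P_{s(v)}$ is a plane of symmetry of $S$.

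Having produced a vertical plane of symmetry $P(v)=P_{s(v)}$ for \emph{every} horizontal direction $v$, I pin down a common axis. A reflection that fixes $S$ also fixes its centroid $G\in\r^3$, so $G$ lies on every $P(v)$; since each $P(v)$ is vertical, each one contains the vertical line $L$ through $G$. Thus $S$ is symmetric with respect to every vertical plane containing $L$. Composing the reflections about two such planes meeting at angle $\theta$ produces the rotation of angle $2\theta$ about $L$, and as $v$ ranges over all horizontal directions these generate the full group of rotations about $L$. Therefore $S$ is invariant under every rotation about the vertical line $L$, i.e. $S$ is a surface of revolution with vertical axis. For the ``moreover'' part, any horizontal plane $\{z=c\}$ is orthogonal to $L$ and invariant under this rotation group, so $S\cap\{z=c\}$ is a rotation-invariant subset of that plane, hence a union of circles centered on $L$; since $S$ is an embedded surface, each nonempty slice is a single such circle.

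I expect the main obstacle to be the boundary case (ii). The interior comparison is the routine half of Alexandrov's scheme, but the free boundary on $\Sigma$ forces one to check with care that the hypotheses of the boundary tangency principle genuinely hold: one must verify that the constant-angle condition, together with the fact that vertical reflections preserve both $\Sigma$ and the contact angle, makes the reflected boundary curve tangent to $\partial S$ and exhibits both sheets as graphs over the common tangent plane. Keeping the reflected piece inside $W$ throughout the descent, and correctly excluding spurious contacts along $\partial S$, is where the argument demands the most attention; once the boundary tangency principle is legitimately applied, the passage to rotational symmetry and to the description of the horizontal slices is comparatively formal.
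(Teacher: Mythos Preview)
Your approach coincides with the paper's: Alexandrov reflection with vertical planes only, exploiting that $H(x,y,z)=\kappa z+\mu$ is invariant under such reflections, and then assembling rotational symmetry from a plane of symmetry in every horizontal direction. The paper does not prove Theorem~\ref{thw} in full but gives exactly this sketch, so you are on target.

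Two points deserve sharpening. First, in your case~(ii) the delicate subcase is when the first contact point lies simultaneously on $\partial S$ \emph{and} on the moving plane $P_{s(v)}$. There both $S_{s}^{-}$ and $\Phi_s(S_{s}^{+})$ are graphs not over a half-plane but over a \emph{quadrant} of the common tangent plane, with $p$ at the corner; the ordinary boundary Hopf lemma does not apply and one needs the Serrin corner lemma \cite{se2}. The paper singles this out explicitly, and your phrase ``boundary version of the tangency principle \ldots graphs over a common neighborhood'' blurs the distinction.

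Second, your argument for the ``moreover'' clause is incomplete: embeddedness alone does not force a horizontal slice of a surface of revolution to be a single circle (think of a torus of revolution). What actually gives this is that, at the critical parameter $s(v)$, each half of $S$ is a graph over $P_{s(v)}$ in the direction $v$; specializing to $v$ perpendicular to the axis shows the profile curve is a graph $r=f(z)$, whence every horizontal section is one circle. This is how the analogous statement is justified in the proof of Theorem~\ref{t5}.
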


Surfaces in Euclidean space whose mean curvature $H$ satisfies $H(x,y,z)=\kappa z+\mu$ are mathematical models of interfaces under the presence of gravity. The proof of Theorem \ref{thw} uses the Alexandrov method by vertical planes and the fact that the reflection about these planes do not change the value of the mean curvature because the third coordinate remains invariant under these reflections. A difference with respect to the case of prescribed boundary is that a new case of contact point $p$ appears in the reflection process. This occurs when this point is a common boundary point between the reflected surface and the original one and, moreover, $p\in\partial S$. The hypothesis on the contact angle implies that both surfaces are tangent at $p$ and both surfaces may be expressed locally as a graph over a quadrant in the common tangent plane at $p$ and where $p$ is the corner of the quadrant. Then we apply a version of the Hopf maximum principle called the Serrin corner lemma. The tangency principle holds for this case, showing  that both surfaces agree in an open set around $p$ \cite{se2}.

Consider $\Sigma$ a sphere $\s_\rho$. Examples of capillary surfaces on $\s_\rho$ are planar disks, spherical caps and rotational surfaces spanning two coaxial circles of $\s_\rho$. Exactly, some pieces of catenoids, unduloids and nodoids contained in $B_\rho$ meet $\s_\rho$ in two circles making a constant contact angle. In this section we study compact embedded surfaces whose boundary lies in an open hemisphere of a sphere $\s_{\rho}$. In order to fix the notation, denote by $W\subset\r^3$ the bounded $3$-domain by $S\cup\Omega$ given by Proposition \ref{pr1}. The following result extends Theorem \ref{thw} for capillary surfaces whose boundary lies in an open hemisphere and it uses the Alexandrov reflection as in Theorem \ref{t1}.

\begin{theorem}\label{t4} Let $S$ be a capillary surface on a sphere $\s_\rho$ such that its boundary $\Gamma$ lies in an open hemisphere. Assume one of the next two hypothesis:
\begin{enumerate}
\item $S-\Gamma$ is included in the ball $B_\rho$.
\item $S-\Gamma$ and $W$ are included in $E_\rho$.
\end{enumerate}
Then $S$ is a spherical cap.
\end{theorem}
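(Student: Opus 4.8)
The plan is to adapt the two-stage Alexandrov reflection scheme of Theorem \ref{t1} together with Wente's argument in Theorem \ref{thw}, replacing the prescribed-boundary barriers by the constant contact-angle condition, which is handled through the corner version of the tangency principle (Serrin's corner lemma, \cite{se2}). The decisive structural point is that one must use only reflections that leave $\s_\rho$ invariant, namely reflections about planes through the center $O$ of the sphere, since precisely these preserve both the support $\s_\rho$ and the constant angle along $\Gamma$.

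First I would normalize the configuration: after a rigid motion the open hemisphere containing $\Gamma$ is $\s_\rho\cap\{z>0\}$ with pole $(0,0,\rho)$, and by Proposition \ref{pr1} there is a domain $\Omega\subset\s_\rho\cap\{z>0\}$ such that $S\cup\Omega$ bounds a $3$-domain $W$. Choosing the unit normal $N$ pointing towards $W$, the farthest-point argument with the tangency principle, exactly as in the claim inside Theorem \ref{t1}, fixes the sign of $H$. Under hypothesis (2) I would rule out $B_\rho\subset W$ by the comparison with the spherical caps $C_t$ from the first part of Theorem \ref{t1}, so that $W\subset E_\rho$; under hypothesis (1) the surface already lies in $B_\rho$. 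In either case each plane through $O$ maps $B_\rho$ onto itself and $E_\rho$ onto itself, so the reflected copies of $S$ remain on the correct side of $\s_\rho$ throughout the sweep.

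Next I would carry out the reflection in two stages as in Theorem \ref{t1}. I would first move the horizontal planes $\Pi(t)\colon z=t$ upward from below until reaching the equatorial plane $\Pi(0)=\{z=0\}$ through $O$, where the contacts are interior and controlled by the tangency principle, arriving with $S(0)^{*}$ in $\overline{W}$ just as there. Then, fixing a horizontal axis $L$ through $O$, I would rotate the planes $Q(t)$ through $L$ starting from $Q(\pi/2)=\{z=0\}$ and continue the Alexandrov reflection. The genuinely new feature is that $\Gamma$ is now free: when a first contact point $p$ of the reflected surface with $S$ lies on $\Gamma$, the common contact angle forces $S_Q(t)^{*}$ and $S_Q(t)^{+}$ to be tangent at $p$ and to be graphs over a common quadrant of the tangent plane, so that Serrin's corner lemma applies exactly as it does for interior contacts via the tangency principle. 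Since $S$ is compact and embedded, the rotating sweep must then terminate at a plane $Q(t_\ast)$ that is a genuine plane of symmetry of $S$ through $O$.

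Finally I would repeat the construction for every choice of the horizontal axis $L$. Each choice yields a reflection symmetry of $S$ in a plane through $O$; two nonparallel such planes meet along a line through $O$, their composition is a rotation fixing that line, and letting $L$ range over all horizontal directions forces $S$ to be invariant under reflection in every plane containing a fixed axis $\ell$ through $O$. Because $\Gamma$ lies in the open upper hemisphere, $\ell$ is the vertical axis through $O$ and the pole, so $S$ is a compact surface of revolution whose boundary is a latitude circle of $\s_\rho$ meeting $\s_\rho$ at a constant angle; by Delaunay's classification \cite{de} such a surface is a spherical cap, the unduloid, nodoid and catenoid pieces being excluded since their boundary consists of two coaxial circles that cannot both lie in a single open hemisphere under (1) or (2). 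I expect the main obstacle to be the rigorous treatment of the free boundary in the rotating phase: one must verify that at a boundary contact point the constant angle indeed yields tangency and a common-quadrant graph representation so that Serrin's corner lemma is applicable, and that the sweep is forced to stop exactly at a symmetry plane; a secondary difficulty is passing from the family of reflection symmetries to true rotational symmetry about a single axis through $O$.
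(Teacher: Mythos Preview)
Your approach coincides with the paper's: case (1) is delegated to \cite{rs}, and for case (2) the paper runs the same two-stage Alexandrov reflection (horizontal planes $\Pi(t)$ up to $z=0$, then planes $Q(t)$ rotating about each horizontal line $L$ through $O$ over $t\in[\pi/2,3\pi/2]$), invokes Serrin's corner lemma at free-boundary contacts, and concludes that the symmetry planes $Q_L$ share a common line $R$, whence $S$ is rotational and hence a spherical cap. Two minor slips worth fixing: under hypothesis (2) the inclusion $W\subset E_\rho$ is assumed, so your step ruling out $B_\rho\subset W$ is redundant; and the common axis need not be the vertical axis through the pole (the paper only asserts a common line $R$), though this does not affect the final conclusion.
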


\begin{proof} The first case was showed in \cite[Prop. 1.2]{rs}. Suppose the second case. After a rigid motion, assume that the open hemisphere containing $\Gamma$ is $\s_{\rho}^+$. Let $\Pi$ be the plane of equation $z=0$ and consider on $S$ the unit normal vector $N$ that points towards $W$. Let us take a fixed horizontal straight line $L$ through the origin. After a rotation about the $z$-axis, suppose that $L$ is the $y$-axis. The Alexandrov method works as in Theorem \ref{t1} beginning by reflections about the planes of type $\Pi(t)$ and the argument is similar. Take the notation used there. In this part of the proof, we prove that there are no contact tangent points between the reflected surface and the initial surface.

Next, we follow using the planes $Q(t)$ where now $t\in [\pi/2,3\pi/2]$. Observe that all the planes $Q(t)$ contain $L$. Let us use the same notation as in Theorem \ref{t1}. Start with $t=\pi/2$ and let $t\nearrow 3\pi/2$. Remark that $Q(\pi/2)=Q(3\pi/2)$ and as the parameter $t$ runs in the interval $[\pi/2,3\pi/2]$, the planes $Q(t)$ sweep all the hemisphere $\s_{\rho}^{+}$. Therefore there exists a first contact point at $t_3\in (\pi/2,3\pi/2)$. We have the next types of contact points:
\begin{enumerate}
\item The surface $S_Q(t_3)^*$ intersects $S_Q(t_3)^+$ at some common interior or boundary point $p_3$ that does not belong to $\Gamma$ and both surfaces are tangent at $p_3$. Then the tangency principle asserts that $Q(t_3)$ is a plane of symmetry of the surface.
\item The contact point $p_3$ lies in $\partial S_Q(t_3)^*\cap \partial S_Q(t_3)^+\cap\Gamma$. As in Theorem \ref{thw}, the condition that $S$ meets $\s_\rho$ with constant angle implies that both surfaces $S_Q(t_3)^*$ and $S_Q(t_3)^+$ are tangent at $p_3$. Furthermore, $S_Q(t_3)^*$ and $S_Q(t_3)^{+}$ are graphs on a quadrant of $T_{p_3}S$ where $p_3$ is the corner of the domain. Use the Serrin corner lemma to conclude that $S_Q(t_3)$ is a plane of symmetry.
\end{enumerate}
Summarizing, we have proved that for each horizontal straight line $L$ through the origin of coordinates, there exists a vector plane $Q_L$ containing $L$ such that $S$ is symmetric about $Q_L$. Since the surface is compact, necessarily all these planes $Q_L$ have a common straight line $R$. Therefore, $S$ is a (non planar) surface of revolution about the axis $R$ and since the mean curvature is constant, $S$ is a spherical cap. $\Box$
\end{proof}

Theorem \ref{t4} extends when the mean curvature is a linear function on a coordinate of $\r^3$.

\begin{theorem}\label{t5} Let $\kappa,\mu\in\r$ with $\kappa>0$. Let $S$ be a compact embedded surface such that its boundary $\Gamma$ lies in $\s_{\rho}^+=\{(x,y,z)\in\s_\rho: z>0\}$. Assume
 \begin{enumerate}
\item $S-\Gamma$ is included in the ball $B_\rho$ or $S-\Gamma$ and $W$ are included in $E_\rho$.
\item The surface $S$ meets $\s_\rho$ with a constant angle along $\Gamma$.
\item The mean curvature of $S$ with respect to the unit normal vector pointing $W$ satisfies $H(x,y,z)=\kappa z+\mu$.
\end{enumerate}
Then $S$ is a surface of revolution with respect to the $z$-axis and any nonempty intersection of $S$ with a horizontal plane is a circle with center on the $z$-axis.
\end{theorem}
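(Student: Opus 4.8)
The plan is to imitate the proof of Theorem \ref{t4}, but the decisive new point, forced by hypothesis (3), is that the admissible reflection planes are now severely restricted. Since the tangency principle compares two surfaces only when they satisfy the \emph{same} mean curvature equation, and here $H(x,y,z)=\kappa z+\mu$, a reflection is admissible only if it leaves $z$ invariant, i.e. only if the plane is vertical (horizontal normal); this is exactly Wente's observation underlying Theorem \ref{thw}. In addition, so that the reflection respects the boundary condition on $\s_\rho$, the plane must leave $\s_\rho$ invariant, i.e. pass through the centre of $\s_\rho$, the origin. The planes that are simultaneously vertical and pass through the origin are precisely the vertical planes containing the $z$-axis, and this is why the $z$-axis is forced to be the axis of revolution. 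In particular the horizontal planes $\Pi(t)$ and the tilted planes $Q(t)$ of Theorem \ref{t1} and Theorem \ref{t4} are no longer available, and the whole reflection scheme must be rebuilt inside this one-parameter family.

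After placing the open hemisphere at $\s_\rho^+$ and taking $N$ pointing into $W$ (the hypothesis $\kappa>0$ playing the role that the positivity of $H$ played in Theorem \ref{t1} in orienting the one-sided comparisons), I would fix a horizontal direction and reflect in two stages, both with vertical planes. First, slide the vertical planes $\{x=c\}$ from $c=+\infty$ inwards: these preserve $z$, so at any interior tangency the two compared pieces satisfy the \emph{same} equation $H=\kappa z+\mu$ and the tangency principle applies with equality. Arguing as in the passage of Theorem \ref{t1} that excludes a symmetry plane before the centre, I would show no symmetry plane occurs for $c>0$ and that the reflected cap lies in $W$ once the plane reaches $\{x=0\}$. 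Second, starting from $\{x=0\}$, rotate about the $z$-axis through the family $Q(t)$ of vertical planes containing that axis, exactly as the rotation about $L$ was carried out in Theorem \ref{t4}. Each such $Q(t)$ preserves both $H$ and $\s_\rho$, so the contact analysis is verbatim that of Theorem \ref{t4}: an interior or off-boundary contact yields, by the tangency principle, a plane of symmetry of $S$, while a contact at a point of $\Gamma\subset\s_\rho$ is, by the constant-angle hypothesis (2), a tangential corner contact to which the Serrin corner lemma applies, again giving a plane of symmetry.

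Performing this for every horizontal direction produces, in each case, a plane of symmetry of $S$ containing the $z$-axis. As in the final step of Theorem \ref{t4}, compactness forces all these symmetry planes to share a common line; since each contains the $z$-axis, that line is the $z$-axis itself. Hence $S$ is invariant under reflection in every vertical plane through the $z$-axis, so it is invariant under the full rotation group about the $z$-axis, and every nonempty horizontal slice is a circle centred on the axis, which is the assertion of Theorem \ref{t5}. The alternative hypothesis in (1), $S-\Gamma\subset B_\rho$, is handled by the identical scheme, as in the two cases of Theorem \ref{t4}.

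The step I expect to be the main obstacle is the first, translating stage. In Theorem \ref{thw} the boundary lies in a \emph{plane}, which is invariant under \emph{all} vertical reflections, so one may simply translate vertical planes in each direction and read off a symmetry plane; here the off-centre planes $\{x=c\}$, $c\neq0$, do \emph{not} preserve $\s_\rho$. Consequently a contact of the reflected cap with $\Gamma$ during the slide need not produce an admissible symmetry plane, and one cannot invoke the Serrin corner lemma there. Instead I would use the embedded closed surface $S\cup\Omega$ bounding $W$, furnished by Proposition \ref{pr1}, as a barrier: this must be shown to exclude such premature boundary contacts and to guarantee that the slide reaches the central plane $\{x=0\}$ with the reflected cap inside $W$, the precise analogue of ``the reflection process arrives until the plane $\Pi(0)$'' in Theorem \ref{t1}. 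Correctly controlling this stage, and thereby seeding the rotation about the $z$-axis, is where the argument is most delicate; once it is in place, the rotational stage and the corner analysis are direct transcriptions of Theorem \ref{t4} and Theorem \ref{thw}.
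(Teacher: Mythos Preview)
Your proposal rests on a misconception that leads you to discard precisely the tool the paper uses. You assert that the tangency principle ``compares two surfaces only when they satisfy the \emph{same} mean curvature equation'' and conclude that only vertical planes are admissible. But Proposition~1 requires only the inequality $H_1\le H_2$ at the contact point, not equality. The paper exploits this: it runs the reflection scheme with the \emph{same} horizontal planes $\Pi(t)$ and tilted planes $Q(t)$ as in Theorems~\ref{t1} and \ref{t4}. For $t_2\le 0$ the reflection $\Phi_{t_2}$ sends $q_2$ to $p_2$ with $z(q_2)\le z(p_2)$, so $H^*(p_2)=\kappa z(q_2)+\mu\le \kappa z(p_2)+\mu=H(p_2)$; likewise $\Psi_t$ increases $z$ for $t\in[\pi/2,\pi)$. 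Since $\kappa>0$, the reflected piece has \emph{smaller} mean curvature at the contact point, and as it lies above $S$ there, the tangency principle applies and forces a symmetry plane. This is the actual role of the hypothesis $\kappa>0$, which you left vague. The contradiction at $t_3<\pi$ is then that $Q(t_3)$ would be a non-vertical plane of symmetry of $S$, impossible because $H=\kappa z+\mu$ is not invariant under such a reflection; hence $t_3=\pi$ and the vertical plane $\{x=0\}$ is a symmetry plane.

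Your substitute scheme---sliding $\{x=c\}$ and then ``rotating about the $z$-axis''---is neither what the paper does nor clearly workable. You correctly identify that off-centre vertical planes do not preserve $\s_\rho$, so a contact with $\Gamma$ during the slide cannot be handled by the corner lemma; you do not resolve this. Moreover, your second stage is not a continuation of a moving-planes process: rotating the plane about the $z$-axis from $\{x=0\}$ does not sweep monotonically through the remaining part of $S$ the way the tilt about $L$ does. In the paper the rotation is about a \emph{horizontal} line $L$ through the origin (so each $Q(t)$ contains $L$ and preserves $\s_\rho$), sweeping from $\Pi(0)$ up to the vertical plane; repeating for every horizontal $L$ gives a vertical symmetry plane through the $z$-axis in each direction, hence rotational symmetry. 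Finally, the paper obtains the last clause---that each horizontal slice is a single circle---from the additional information that the reflection process exhibits each half of $S$ as a graph over the symmetry plane, so the profile curve is a graph on the $z$-axis; rotational invariance alone would allow several concentric circles.
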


\begin{proof} The proof for the cases that $S-\Gamma\subset B_\rho$ or $S-\Gamma, W\subset E_\rho$ are similar and we only consider the case that $S-\Gamma$ is included in $B_\rho$. In particular, $W\subset B_\rho$. The proof follows the same steps as in Theorem \ref{t4} and we use the same notation. Fix $L\subset\Pi$ a horizontal straight line through the origin which, after a rotation about the $z$-axis, suppose that $L$ is the $y$-axis. Start with reflections about the planes $\Pi(t)$. Assume $t_2<0$. The contact point $p_2$ is a common interior or boundary point of $S(t_2)^*$ and $S(t_2)^+$ where both surfaces are tangent. Then $p_2=\Phi_{t_2}(q_2)$. In particular, $z(q_2)\leq z(p_2)$. Denote by $H^*$ the mean curvature of the reflected surface $S(t_2)^*$. The unit normal vectors of $S(t_2)^*$ and $S(t_2)^+$ coincide at $p_2$ and $S(t_2)^*$ lies over $S(t_2)^+$ around $p_2$ because both vectors point towards $W$. Since $\kappa>0$,
$$H^*(p_2)=H(q_2)=\kappa z(q_2)+\mu\leq \kappa z(p_2)+\mu=H(p_2).$$
The tangency principle concludes that $\Pi(t_2)$ is a plane of symmetry, which is a contradiction because $\Gamma\subset \s_{\rho}^+$. Therefore, in the method of moving planes, we can follow reflecting with the planes $\Pi(t)$ arriving at $t=0$.

Next, use the reflections about the planes $Q(t)$ including $L$ where $t\in[\pi/2,\pi]$. Assume the existence of the time $t_3$ for which $int(S_Q(t_3)^*)$ leaves to be included in $W$ or $S_Q(t_3)^*$ is not a graph on $\Pi(t_3)$. Then there exists an interior or boundary point $p_3$ where $S_Q(t_3)^*$ and $S_Q(t_3)^+$ are tangent at $p_3$. We claim that $t_3=\pi$. On the contrary, assume that $t_3\in [\pi/2,\pi)$. Let us observe that the reflections $\Psi_t$ increases the height of a point, for $t\in [\pi/2,\pi)$, i.e. if $q\in S_Q(t)^{-}$, then $z(q)\leq z(\Psi_t(q))$. Denote by $H_Q^*$ the mean curvature of $S_Q(t_3)^{*}$. If $p_3=\Psi_{t_3}(q_3)$, then
$$H_Q^{*}(p_3)=H(q_3)=\kappa z(q_3)+\mu\leq\kappa z(\Psi_{t_3}(q_3))+\mu=H(p_3).$$
Again the unit normal vectors of $S_Q(t_3)^*$ and $S_Q(t_3)^{+}$ coincide at $p_3$ and both point towards $W$. Thus $S_Q(t_3)^*\geq S_Q(t_3)^+$ around $p_3$. As in Theorem \ref{t4}, the new case appears when $S_Q(t_3)^*$ touches $S_Q(t_3)^+$ at a point $p_3\in \Gamma$ with
$p_3\in\partial S_Q(t_3)^*\cap \partial S_Q(t_3)^+$. Then both surfaces are tangent at $p_3$ because $S$ meets $\s_\rho$ with constant angle along $\Gamma$. The Serrin corner lemma shows that $Q(t_3)$ is a plane of symmetry of $S$. As $t_3\not=\pi$, $Q(t_3)$ is not a vertical plane. This is a contradiction because the mean curvature $H(x,y,z)=\kappa z+\mu$ changes by reflections about $Q(t_3)$.

This proves that $t_3=\pi$ and $S$ is invariant by reflections about the plane of equation $x=0$. Doing the same argument for all such horizontal straight lines, we deduce that the surface is a surface of revolution with respect to the $z$-axis. Moreover, by the proof, each plane of symmetry separates $S$ in two graphs, which shows that the generating curve is a graph on the $z$-axis. This proves that any nonempty intersection of $S$ with a horizontal plane is a circle with center on the axis. $\Box$
\end{proof}

%\begin{rmk}\label{rem1} Theorems \ref{t4} and \ref{t5} inform that when we deposit a liquid drop on the boundary of a %round closed ball and if the liquid wets a small part of the sphere, indeed, a domain included in an open hemisphere, %then the liquid drop adopts axisymmetric shape. Therefore both results show that in physical experiments, if the drop %wets a small portion of $\s_\rho$, then $S$ is a rotational surface.
%\end{rmk}

\emph{Acknowledgements}: Part of this work was realized while the first author was visiting KIAS at Seoul and the Department of Mathematics of the Pusan National University in April of 2012, whose hospitality is gratefully acknowledged.

%%%%%%%%%%%%%%%%%%%%%%%%%%%%%%%%%%%%%%

\end{document}